\documentclass{article}%
\usepackage{amssymb}
\usepackage{amsmath}
\usepackage{amsfonts}
\usepackage{geometry}%
\setcounter{MaxMatrixCols}{30}%
\usepackage{graphicx}
\providecommand{\U}[1]{\protect\rule{.1in}{.1in}}
\newtheorem{theorem}{Theorem}[section]

\newtheorem{lemma}[theorem]{Lemma}

\newtheorem{proposition}[theorem]{Proposition}

\newenvironment{proof}[1][Proof]{\noindent\textbf{#1.} }{\ \rule{0.5em}{0.5em}}
\geometry{left=1in,right=1in,top=1in,bottom=1in}
\begin{document}

\title{On Tropical Linear and Integer Programs}
\author{Peter Butkovi\v{c}\thanks{E-mail: p.butkovic@bham.ac.uk}\\School of Mathematics, University of Birmingham\\Birmingham B15 2TT, United Kingdom}
\maketitle

\begin{abstract}
We present simple compact proofs of the strong and weak duality theorems of
tropical linear programming. It follows that there is no duality gap for a
pair of tropical primal-dual problems. This result together with known
properties of subeigenvectors enables us to directly solve a special tropical
linear program with two-sided constraints.

We also study the duality gap in tropical integer linear programming. A direct
solution is available for the primal problem. An algorithm of quadratic
complexity is presented for the dual problem. A direct solution is available
provided that all coefficients of the objective function are integer. This
solution provides a good estimate of the optimal objective function value in
the general case.

AMS classification: 15A18, 15A80

Keywords: tropical linear programming; tropical integer programming; duality; residuation.

\end{abstract}

\section{\bigskip Introduction}

Tropical linear algebra (also called max-algebra or path algebra) is an
analogue of linear algebra developed for the pair of operations $\left(
\oplus,\otimes\right)  $ where%
\[
a\oplus b=\max(a,b)
\]
and
\[
a\otimes b=a+b
\]
for $a,b\in\overline{\mathbb{R}}\overset{def}{=}\mathbb{R}\cup\{-\infty\}. $
This pair is extended to matrices and vectors as in conventional linear
algebra. That is if $A=(a_{ij}),~B=(b_{ij})$ and $C=(c_{ij})$ are matrices of
compatible sizes with entries from $\overline{\mathbb{R}}$, we write
$C=A\oplus B$ if $c_{ij}=a_{ij}\oplus b_{ij}$ for all $i,j$ and $C=A\otimes B$
if
\[
c_{ij}=\bigoplus\limits_{k}a_{ik}\otimes b_{kj}=\max_{k}(a_{ik}+b_{kj})
\]
for all $i,j$. If $\alpha\in\overline{\mathbb{R}}$ then $\alpha\otimes
A=\left(  \alpha\otimes a_{ij}\right)  $. For simplicity we will use the
convention of not writing the symbol $\otimes.$ Thus in what follows the
symbol $\otimes$ will not be used (except when necessary for clarity), and
unless explicitly stated otherwise, all multiplications indicated are in max-algebra.

The interest in tropical linear algebra \bigskip was originally motivated by
the possibility of dealing with a class of non-linear problems in pure and
applied mathematics, operational research, science and engineering as if they
were linear due to the fact that $\left(  \overline{\mathbb{R}},\oplus
,\otimes\right)  $ is a commutative and idempotent semifield. Besides the main
advantage of using linear rather than non-linear techniques, tropical linear
algebra enables us to efficiently describe and deal with complex sets \cite{PB
BMIS}, reveal combinatorial aspects of problems \cite{Linalgcomb} and view a
class of problems in a new, unconventional way. The first pioneering papers
appeared in the 1960s \cite{CG first}, \cite{CG0} and \cite{Vorobjov},
followed by substantial contributions in the 1970s and 1980s such as
\cite{CG1}, \cite{GM dioids}, \cite{KZimm} and \cite{Cohen et al}. Since 1995
we have seen a remarkable expansion of this research field following a number
of findings and applications in areas as diverse as algebraic geometry
\cite{mikhalkin} and \cite{sturmfels}, geometry \cite{joswig}, control theory
and optimization \cite{baccelli}, phylogenetic \cite{speyer sturmfels},
modelling of the cellular protein production \cite{Brackley} and railway
scheduling \cite{heidergott et al}. A number of research monographs have been
published \cite{baccelli}, \cite{PB book}, \cite{heidergott et al} and
\cite{McEneaney}. A chapter on max-algebra appears in a handbook of linear
algebra \cite{hogben} and a chapter on idempotent semirings is in a monograph
on semirings \cite{golan}.

Tropical linear algebra covers a range of linear-algebraic problems in the
max-linear setting, such as systems of linear equations and inequalities,
linear independence and rank, bases and dimension, polynomials, characteristic
polynomials, matrix equations, matrix orbits and periodicity of matrix powers
\cite{baccelli}, \cite{PB book}, \cite{CG1} \cite{heidergott et al}. Among the
most intensively studied questions was the \textit{eigenproblem}, that is the
question, for a given square matrix $A$ to find all values of $\lambda$ and
non-trivial vectors $x$ such that $Ax=\lambda x.$ This and related questions
such as $z$-matrix equations $Ax\oplus b=\lambda x$ \cite{BSS} have been
answered \cite{CG1}, \cite{GM dioids}, \cite{gaubert thesis}, \cite{bapat
first}, \cite{PB book} with numerically stable low-order polynomial
algorithms. The same is true about the \textit{subeigenproblem} that is
solution to $Ax\leq\lambda x,$ which appears to be strongly linked to the
eigenproblem. In contrast, attention has only recently been paid to the
\textit{supereigenproblem} that is solution to $Ax\geq\lambda x,$ which is
trivial for small values of $\lambda$ but in general the description of the
whole solution set seems to be much more difficult than for the eigenproblem
\cite{super ev}, \cite{ss supereig}. At the same time tropical linear and
integer linear programs have also been studied \cite{KZimm}, \cite{PB book},
\cite{MLP}, \cite{gks}, \cite{PB+Marie}. While one-sided tropical linear
systems of equations and inequalities are solvable in low-order polynomial
time, no polynomial method seems to exist for solving their two-sided
counterparts in general. Consequently, linear programs with one-sided
constraints are easily solvable while polynomial solution method for linear or
integer linear programs with two-sided constraints remains an open question.

The aim of this paper is to give simple compact proofs of duality theorems for
tropical linear programs with one-sided constraints and then use this result
to present efficient solution methods for solving

(a) a special type of tropical linear programs with two-sided constraints, and

\bigskip(b) tropical dual integer programs.

Note that the weak and strong duality in tropical linear programming has been
investigated in the past, in various settings \cite{hoffman}, \cite{UZimm}.
Duality theorems in the present paper appeared in \cite{superville} with
rather complicated proofs. The present paper will use new methodology, not
available in the 1970s, to provide simple and compact proofs.

Consider the following motivational example \cite{CG1}. Products
$P_{1},...,P_{m}$ are prepared using $n$ machines (or processors), every
machine contributing to the completion of each product by producing a partial
product. It is assumed that each machine can work for all products
simultaneously and that all these actions on a machine start as soon as the
machine starts to work. Let $a_{ij}$ be the duration of the work of the
$j^{th}$ machine needed to complete the partial product for $P_{i}$
$(i=1,...,m;j=1,...,n).$ If this interaction is not required for some $i$ and
$j$ then $a_{ij}$ is set to $-\infty.$ The matrix $A=\left(  a_{ij}\right)  $
is called the \textit{production matrix}. Let us denote by $x_{j}$ the
starting time of the $j^{th}$ machine $(j=1,...,n)$. Then all partial products
for $P_{i}$ $(i=1,...,m)$ will be ready at time
\[
\max(x_{1}+a_{i1},...,x_{n}+a_{in}).
\]
Hence if $b_{1},...,b_{m}$ are given completion times then the starting times
have to satisfy the system of equations:%

\[
\max(x_{1}+a_{i1},...,x_{n}+a_{in})=b_{i}\text{ \ for all }i=1,...,m.
\]
Using max-algebra this system can be written in a compact form as a system of
linear equations:%
\begin{equation}
Ax=b.\label{one-sided}%
\end{equation}
A system of the form (\ref{one-sided}) is called a \textit{one-sided system of
max-linear equations} (or briefly a \textit{one-sided max-linear system }or
just a\textit{\ max-linear system}). Such systems are easily solvable \cite{CG
first}, \cite{PB book}, \cite{KZimm}, see also Section \ref{Sec Prelim}.

In applications it may be required that the starting times are as small as
possible (to reflect the fact that the producer has free capacity and wishes
to start production as soon as possible). This means to minimise the value of%
\begin{equation}
f(x)=\max(x_{1},...,x_{n})\label{eq    f(x) is otx}%
\end{equation}
with respect to (\ref{one-sided}). In general we may need to minimize (or
possibly to maximize) a \textit{max-linear }function, that is,
\begin{equation}
f(x)=f^{T}x=\max(f_{1}+x_{1},...,f_{n}+x_{n}),\label{eq   mlr function}%
\end{equation}
where $f=\left(  f_{1},...,f_{n}\right)  ^{T}.$ So in (\ref{eq f(x) is otx})
we have $f=\left(  0,...,0\right)  ^{T}.$ Thus the max-linear programs are of
the form%
\begin{equation}
f^{T}x\longrightarrow\min\text{ or }\max\nonumber
\end{equation}
subject to%
\[
Ax=b.
\]

Sometimes the vector $b$ is given to require the earliest or latest rather
than exact completion times. In such cases the constraints of the optimization
problem are%
\[
Ax\geq b
\]
or,%
\[
Ax\leq b.
\]
In some cases two processes with the same starting times and production
matrices, say, $A$ and $B,$ have to be co-ordinated so that the products of
the second are completed not before those completed by the first and possibly
also not before given time restrictions given by a vector $d=\left(
d_{1},...,d_{m}\right)  ^{T}.$ Then the constraints have the form%
\begin{equation}
Ax\oplus d\leq Bx.\label{eq   tsls special}%
\end{equation}
In full generality, max-linear programs are of the form%
\[
f^{T}x\longrightarrow\min\text{ or }\max
\]
s.t.%
\[
Ax\oplus c=Bx\oplus d.
\]

\bigskip

These have been first studied in \cite{MLP}, with pseudopolynomial methods
presented in \cite{akian equivalence to mean payoff}, \cite{allamigeon et al}
and \cite{gks}. No polynomial solution method seems to exist in general, not
even for feasibility checking, although it is known that this problem in
$NP\cap co-NP$ \cite{bezem}.

\section{ Definitions, notation and preliminary results \label{Sec Prelim}}

Throughout the paper we denote $-\infty$ by $\varepsilon$ (the neutral element
with respect to $\oplus$) and for convenience we also denote by the same
symbol any vector, whose all components are $-\infty,$ or a matrix whose all
entries are $-\infty.$ A matrix or vector with all entries equal to $0$ will
also be denoted by $0.$ If $a\in\mathbb{R}$ then the symbol $a^{-1}$ stands
for $-a.$ Matrices and vectors whose all entries are real numbers are called
\textit{finite}. We assume everywhere that $n,m\geq1$ are integers and denote
$N=\left\{  1,...,n\right\}  ,M=\left\{  1,...,m\right\}  .$

It is easily proved that if $A,B,C$ and $D$ are matrices of compatible sizes
(including vectors considered as $m\times1$ matrices) then the usual laws of
associativity and distributivity hold and also isotonicity is satisfied:%
\begin{equation}
A\geq B\Longrightarrow AC\geq BC\text{ \ and }DA\geq DB.\label{Isotonicity}%
\end{equation}

\bigskip

A square matrix is called \textit{diagonal} if all its diagonal entries are
real numbers and off-diagonal entries are $\varepsilon.$ More precisely, if
$x=\left(  x_{1},...,x_{n}\right)  ^{T}\in\mathbb{R}^{n}$ then $diag\left(
x_{1},...,x_{n}\right)  $ or just $diag\left(  x\right)  $ is the $n\times n$
diagonal matrix%
\[
\left(
\begin{array}
[c]{cccc}%
x_{1} & \varepsilon & ... & \varepsilon\\
\varepsilon & x_{2} & ... & \varepsilon\\
\vdots & \vdots & \ddots & \vdots\\
\varepsilon & \varepsilon & ... & x_{n}%
\end{array}
\right)  .
\]
The matrix $diag\left(  0\right)  $ is called the \textit{unit matrix} and
denoted $I.$ Obviously, $AI=IA=A$ whenever $A$ and $I$ are of compatible
sizes. A matrix obtained from a diagonal matrix [unit matrix] by permuting the
rows and/or columns is called a \textit{generalized permutation matrix}
[\textit{permutation matrix}]. It is known that in tropical linear algebra
generalized permutation matrices are the only type of invertible matrices
\cite{CG1}, \cite{PB book}. Clearly,
\[
\left(  diag\left(  x_{1},...,x_{n}\right)  \right)  ^{-1}=diag\left(
x_{1}^{-1},...,x_{n}^{-1}\right)  .
\]
If $A$ is a square matrix then the iterated product $AA...A$ in which the
symbol $A$ appears $k$-times will be denoted by $A^{k}$. By definition
$A^{0}=I$.

\bigskip Given $A\in\overline{\mathbb{R}}^{n\times n}$ it is usual \cite{CG1},
\cite{baccelli}, \cite{heidergott et al}, \cite{PB book} in max-algebra to
define the infinite series
\begin{equation}
A^{\ast}=I\oplus A^{+}=I\oplus A\oplus A^{2}\oplus A^{3}\oplus...\text{
.}\label{Def Delta}%
\end{equation}
The matrix $A^{\ast}$ is called the \textit{strong transitive closure} of $A,$
or the \textit{Kleene Star}.

It follows from the definitions that every entry of the matrix sequence
\[
\left\{  A\oplus A^{2}\oplus...\oplus A^{k}\right\}  _{k=0}^{\infty}%
\]
is a nondecreasing sequence in $\overline{\mathbb{R}}$ and therefore either it
is convergent to a real number (when bounded) or its limit is $+\infty$ (when
unbounded). If $\lambda(A)\leq0$ then%
\[
A^{\ast}=I\oplus A\oplus A^{2}\oplus...\oplus A^{k-1}%
\]
for every $k\geq n$ and can be found using the Floyd-Warshall algorithm in
$O\left(  n^{3}\right)  $ time \cite{PB book}.

\bigskip The matrix\ $\lambda^{-1}A$ for $\lambda\in\mathbb{R}$ will be
denoted by $A_{\lambda}$ and $(A_{\lambda})^{\ast}$ will be shortly written as
$A_{\lambda}^{\ast}.$

Given $A=(a_{ij})\in\overline{\mathbb{R}}^{n\times n}$ the symbol $D_{A}$ will
denote the weighted digraph $\left(  N,E,w\right)  $ (called
\textit{associated with} $A$) where $E=\left\{  \left(  i,j\right)
;a_{ij}>\varepsilon\right\}  $ and $w\left(  i,j\right)  =a_{ij}$ for all
$(i,j)\in E.$ The symbol $\lambda(A)$ will stand for the \textit{maximum cycle
mean} of $A$, that is:
\begin{equation}
\lambda(A)=\max_{\sigma}\mu(\sigma,A),\label{mcm}%
\end{equation}
where the maximization is taken over all elementary cycles in $D_{A},$ and
\begin{equation}
\mu(\sigma,A)=\frac{w(\sigma,A)}{l\left(  \sigma\right)  }\label{mean}%
\end{equation}
denotes the \textit{mean} of a cycle $\sigma$. With the convention
$\max\emptyset=\varepsilon$ the value $\lambda\left(  A\right)  $ always
exists since the number of elementary cycles is finite. It can be computed in
$O\left(  n^{3}\right)  $ time \cite{karp}, see also \cite{PB book}. Observe
that $\lambda\left(  A\right)  =\varepsilon$ if and only if $D_{A}$ is acyclic.

\bigskip The tropical \textit{eigenvalue-eigenvector problem }(briefly
\textit{eigenproblem}) is the following:

\textit{Given }$A\in\overline{\mathbb{R}}^{n\times n}$\textit{, find all
}$\lambda\in\overline{\mathbb{R}}$ \textit{(eigenvalues) and\ }$x\in
$\textit{\ }$\overline{\mathbb{R}}^{n},x\neq\varepsilon$%
\textit{\ (eigenvectors) such that }%
\[
Ax=\lambda x.
\]

This problem has been studied since the work of R.A.Cuninghame-Green
\cite{CG0}. An $n\times n$ matrix has up to $n$ eigenvalues with
$\lambda\left(  A\right)  $ always being the largest eigenvalue (called
\textit{principal}). This finding was first presented by R.A.Cuninghame-Green
\cite{CG1} and M.Gondran and M.Minoux \cite{gondran minoux}, see also
N.N.Vorobyov \cite{Vorobjov}. The full spectrum was first described by
S.Gaubert \cite{gaubert thesis} and R.B.Bapat, D.Stanford and P. van den
Driessche \cite{bapat first}. The spectrum and bases of all eigenspaces can be
found in $O(n^{3})$ time \cite{Robustness paper} and \cite{PB book}.

The aim of this paper is to study integer solutions to tropical linear
programs and therefore we summarize here only the results on finite solutions
and for finite $A$ and $b$.

\begin{theorem}
\label{Th Ray finiteness crit}\cite{CG0}, \cite{CG1}, \cite{gondran minoux}
\bigskip If $A\in\mathbb{R}^{n\times n}$ then $\lambda\left(  A\right)  $ is
the unique eigenvalue of $A$ and all eigenvectors of $A$ are finite.
\end{theorem}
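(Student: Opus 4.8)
The plan is to combine the Kleene‑star construction with an elementary cycle‑counting argument. Since $A$ is finite, the digraph $D_{A}$ is complete (all arcs and loops present), so it contains cycles and $\lambda(A)\in\mathbb{R}$. First I would record the observation that forces finiteness for free: if $Ax=\mu x$ with $x\neq\varepsilon$, choose $k$ with $x_{k}>\varepsilon$; then for every $i$ we have $(Ax)_{i}=\max_{j}(a_{ij}+x_{j})\geq a_{ik}+x_{k}>\varepsilon$ because $a_{ik}\in\mathbb{R}$. Hence $\mu+x_{i}=(Ax)_{i}>\varepsilon$ for all $i$, which is possible only if $\mu\in\mathbb{R}$ and $x\in\mathbb{R}^{n}$. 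This already establishes that every eigenvalue is real and every eigenvector is finite.

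Second, I would show that $\lambda(A)$ is genuinely an eigenvalue. Write $\lambda=\lambda(A)$ and pass to $A_{\lambda}=\lambda^{-1}A$, so that $\lambda(A_{\lambda})=0$; then $A_{\lambda}^{\ast}=I\oplus A_{\lambda}\oplus\cdots\oplus A_{\lambda}^{n-1}$ is well defined, and it is finite because every entry $(A_{\lambda}^{\ast})_{ij}$ is the maximum weight of a path from $i$ to $j$ in the complete digraph $D_{A_{\lambda}}$, a maximum which exists (a path exists) and is bounded above (no cycle of $D_{A_{\lambda}}$ has positive mean). Pick a node $j$ lying on a cycle of $D_{A}$ that attains the maximum in (\ref{mcm}); then the maximum closed‑walk weight through $j$ in $D_{A_{\lambda}}$ is $0$, so $(A_{\lambda}^{+})_{jj}=0$ and hence the $j$‑th columns of $A_{\lambda}^{\ast}=I\oplus A_{\lambda}^{+}$ and of $A_{\lambda}^{+}=A_{\lambda}A_{\lambda}^{\ast}$ coincide. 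Thus $A_{\lambda}(A_{\lambda}^{\ast})_{\cdot j}=(A_{\lambda}^{\ast})_{\cdot j}$, i.e. $A(A_{\lambda}^{\ast})_{\cdot j}=\lambda(A_{\lambda}^{\ast})_{\cdot j}$, and this column is finite with $j$‑th entry $0$, so it is an eigenvector for $\lambda$.

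Third, for uniqueness take an arbitrary eigenpair $Ax=\mu x$; by the first step $x$ is finite and $\mu\in\mathbb{R}$. The equation means $a_{ij}+x_{j}\leq\mu+x_{i}$ for all $i,j$, with equality attained at some $j=\pi(i)$ for each $i$. Summing the inequalities along the arcs of an arbitrary cycle $\sigma$ and cancelling the finite terms $x_{i}$ gives $w(\sigma,A)\leq l(\sigma)\mu$, hence $\mu(\sigma,A)\leq\mu$ and therefore $\lambda(A)\leq\mu$. For the opposite inequality, iterate the map $\pi$ from an arbitrary node until a node repeats; this produces an elementary cycle $\sigma_{0}$ all of whose arcs satisfy the equality $a_{ij}+x_{j}=\mu+x_{i}$, and summing these equalities around $\sigma_{0}$ yields $w(\sigma_{0},A)=l(\sigma_{0})\mu$, whence $\mu=\mu(\sigma_{0},A)\leq\lambda(A)$. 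Together, $\mu=\lambda(A)$, which proves uniqueness.

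The delicate point is the inequality $\mu\leq\lambda(A)$: one must exhibit a concrete cycle along which the defining maxima are attained arc by arc, which is the functional‑graph argument above and relies on the finiteness of $x$ so that the attained maxima are genuinely real; everything else is routine path/cycle bookkeeping in the complete digraph $D_{A}$.
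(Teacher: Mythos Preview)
Your argument is correct and is essentially the classical proof of this result (as found in the cited references \cite{CG0}, \cite{CG1}, \cite{gondran minoux}): the finiteness of eigenvectors via one application of $A$ to a nontrivial $x$, existence of the eigenvalue $\lambda(A)$ via a critical column of the Kleene star $A_{\lambda}^{\ast}$, and uniqueness via the cycle/functional-graph inequalities. Note, however, that the paper does not supply its own proof of this theorem; it is quoted from the literature as background, so there is no ``paper's proof'' to compare against beyond observing that your route is the standard one in those sources.
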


If $A\in\mathbb{R}^{n\times n}$ and $\lambda\in\mathbb{R}$ then a vector
$x\in\mathbb{R}^{n},x\neq\varepsilon$ satisfying
\begin{equation}
Ax\leq\lambda x\label{Spectral ineq}%
\end{equation}
is called a \textit{subeigenvector of }$A$\textit{\ with associated
subeigenvalue }$\lambda$\textit{. W}e denote $V_{\ast}(A,\lambda)=\left\{
x\in\mathbb{R}^{n}:Ax\leq\lambda x\right\}  .$

\begin{theorem}
\label{Th Set of subeigenvectors}\cite{BS}, \cite{PB book} Let $A\in
\mathbb{R}^{n\times n}$. Then $V_{\ast}\left(  A,\lambda\right)  \neq
\emptyset$ if and only if $\lambda\geq\lambda\left(  A\right)  $ and $V_{\ast
}\left(  A,\lambda\right)  =\left\{  A_{\lambda}^{\ast}u:u\in\mathbb{R}%
^{n}\right\}  .$
\end{theorem}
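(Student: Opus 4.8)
The plan is to prove Theorem \ref{Th Set of subeigenvectors} in two stages: first the characterisation of when $V_{\ast}(A,\lambda)$ is nonempty, and then the explicit parametrisation. For the first stage I would use a normalisation trick. Since $Ax\leq\lambda x$ is equivalent to $A_{\lambda}x\leq x$ (dividing by $\lambda\in\mathbb{R}$ in max-algebra, which is legitimate because $\lambda$ is finite), it suffices to treat the case $\lambda=0$, i.e. $Bx\leq x$ where $B=A_{\lambda}$. Note $\lambda(B)=\lambda(A)-\lambda$ by \eqref{mcm}--\eqref{mean}, so the condition $\lambda\geq\lambda(A)$ is exactly $\lambda(B)\leq 0$. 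Thus the whole theorem reduces to: \emph{for finite $B$, the set $\{x\in\mathbb{R}^{n}:Bx\leq x\}$ is nonempty iff $\lambda(B)\leq 0$, and in that case it equals $\{B^{\ast}u:u\in\mathbb{R}^{n}\}$.}

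\textbf{The nonemptiness direction.}
For the ``only if'' part, suppose $x\in\mathbb{R}^{n}$ satisfies $Bx\leq x$. Take any elementary cycle $\sigma=(i_{1},i_{2},\dots,i_{k},i_{1})$ in $D_{B}$. From $Bx\leq x$ we read off $b_{i_{r}i_{r+1}}+x_{i_{r+1}}\leq x_{i_{r}}$ for each edge of the cycle; summing these $k$ inequalities around the cycle, the $x$-terms cancel and we get $w(\sigma,B)\leq 0$, hence $\mu(\sigma,B)\leq 0$, and since $\sigma$ was arbitrary, $\lambda(B)\leq 0$. For the ``if'' part, assuming $\lambda(B)\leq 0$, the series $B^{\ast}=I\oplus B\oplus B^{2}\oplus\cdots$ converges to a finite matrix (by the remarks preceding the theorem, since $B$ is finite and $\lambda(B)\leq 0$; finiteness of the entries follows because $B$ is finite, so $I\oplus B$ already has a finite row). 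Then for any $u\in\mathbb{R}^{n}$, $B(B^{\ast}u)=B^{+}u\leq B^{\ast}u$, using $B^{+}=B\oplus B^{2}\oplus\cdots\leq I\oplus B^{+}=B^{\ast}$; so $B^{\ast}u$ is a witness and the set is nonempty. This simultaneously shows the inclusion $\{B^{\ast}u:u\in\mathbb{R}^{n}\}\subseteq\{x:Bx\leq x\}$.

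\textbf{The reverse inclusion and the main obstacle.}
It remains to show that every solution $x$ of $Bx\leq x$ has the form $B^{\ast}u$. The natural candidate is $u=x$ itself: I claim $B^{\ast}x=x$. Indeed $B^{\ast}x=(I\oplus B\oplus B^{2}\oplus\cdots)x=x\oplus Bx\oplus B^{2}x\oplus\cdots$, and iterating $Bx\leq x$ gives $B^{k}x\leq x$ for all $k\geq 0$ (by isotonicity \eqref{Isotonicity}: $B^{k+1}x=B(B^{k}x)\leq Bx\leq x$), so the supremum $B^{\ast}x$ equals $x$ exactly. Hence $x=B^{\ast}x$ lies in the claimed set, finishing the proof after translating back via $B=A_{\lambda}$ and $B^{\ast}=A_{\lambda}^{\ast}$. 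The only genuinely delicate point is ensuring $B^{\ast}$ has all \emph{finite} entries (not $+\infty$) so that $B^{\ast}u\in\mathbb{R}^{n}$ and the formula stays inside $\mathbb{R}^{n}$ as claimed; this rests on $\lambda(B)\leq 0$ giving boundedness of the defining series together with finiteness of $B$ giving a finite lower bound on each entry, and I would state this explicitly rather than leave it implicit.
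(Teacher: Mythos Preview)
Your argument is correct and is essentially the standard proof of this result. Note, however, that the paper does not actually supply a proof of this theorem: it is stated as a preliminary result with citations to \cite{BS} and \cite{PB book}, and is used later without being proved in the paper itself. So there is no ``paper's own proof'' to compare against.

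That said, what you wrote matches the argument one finds in the cited sources. The reduction to $\lambda=0$ via $B=A_{\lambda}$, the cycle-summing argument for necessity of $\lambda(B)\leq 0$, the inclusion $\{B^{\ast}u\}\subseteq V_{\ast}$ from $BB^{\ast}=B^{+}\leq B^{\ast}$, and the reverse inclusion via $B^{\ast}x=x$ when $Bx\leq x$ are exactly the steps in \cite{PB book}. Your remark about finiteness of $B^{\ast}$ is also the right thing to flag: since $A$ (hence $B$) is assumed finite, $B^{\ast}\geq B$ entrywise gives finite lower bounds off the diagonal and $B^{\ast}\geq I$ gives $0$ on the diagonal, while $\lambda(B)\leq 0$ bounds the series above; so $B^{\ast}\in\mathbb{R}^{n\times n}$ and $B^{\ast}u\in\mathbb{R}^{n}$ for $u\in\mathbb{R}^{n}$. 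Nothing is missing.
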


Let us define \cite{CG1}, \cite{PB book} min-algebra over $\mathbb{R}$ by%
\[
a\oplus^{\prime}b=\min(a,b)
\]
and
\[
a\otimes^{\prime}b=a\otimes b
\]
for all $a$ and $b.$ We extend the pair of operations $\left(  \oplus^{\prime
},\otimes^{\prime}\right)  $ to matrices and vectors in the same way as in max-algebra.

\bigskip We also define the \textit{conjugate} $A^{\#}=-A^{T}.$ It is easily
seen that
\begin{equation}
\left(  A^{\#}\right)  ^{\#}=A,\label{A hash hash}%
\end{equation}
\begin{equation}
\left(  A\otimes B\right)  ^{\#}=B^{\#}\otimes^{\prime}A^{\#}\label{AB hash}%
\end{equation}
and%
\begin{equation}
\left(  A\otimes^{\prime}B\right)  ^{\#}=B^{\#}\otimes A^{\#}\label{AdashB}%
\end{equation}
whenever $A$ and $B$ are compatible. Further, for any $u\in\mathbb{R}^{n}$ we
have
\begin{equation}
u^{\#}\otimes u=0\label{u hash u}%
\end{equation}
and
\begin{equation}
u\otimes u^{\#}\geq I.\label{u u hash}%
\end{equation}

We will usually not write the operator $\otimes^{\prime}$and for matrices the
convention applies that if no operator appears then the product is in
min-algebra whenever it follows the symbol $\#$, otherwise it is in
max-algebra. In this way a residuated pair of operations (a special case of
Galois connection) has been defined, namely%
\begin{equation}
Ax\leq y\Longleftrightarrow x\leq A^{\#}y\label{residuation}%
\end{equation}
for all $x,y\in\mathbb{R}^{n}.$ Hence $Ax\leq y$ implies $A(A^{\#}y)\leq y$.
It follows immediately that a one-sided system $Ax=b$ has a solution if and
only if $A\left(  A^{\#}b\right)  =b$ and that the system $Ax\leq b$ always
has an infinite number of solutions with $A^{\#}b$ being the greatest solution.

\section{Duality for tropical linear programs\label{Sec LP duality}}

\bigskip Let $A=\left(  a_{ij}\right)  \in\mathbb{R}^{m\times n},b=\left(
b_{1},...,b_{m}\right)  ^{T}\in\mathbb{R}^{m},c=\left(  c_{1},...,c_{n}%
\right)  ^{T}\in\mathbb{R}^{n}$ and consider the following primal-dual pair of
tropical linear programs:%
\begin{equation}
\left.
\begin{array}
[c]{c}%
f\left(  x\right)  =c^{T}x\rightarrow\max\\
s.t.\\
Ax\leq b\\
x\in\mathbb{R}^{n}%
\end{array}
\right\} \tag*{(P)}%
\end{equation}
\label{(P)}

and%
\begin{equation}
\left.
\begin{array}
[c]{c}%
\varphi\left(  \pi\right)  =\pi^{T}b\rightarrow\min\\
s.t.\\
\pi^{T}A\geq c^{T}\\
\pi\in\mathbb{R}^{m}%
\end{array}
\right\} \tag*{(D)}\label{(D)}%
\end{equation}

\bigskip Let us denote $S_{P}=\left\{  x\in\mathbb{R}^{n}:Ax\leq b\right\}  $
and $S_{D}=\left\{  \pi\in\mathbb{R}^{m}:\pi^{T}A\geq c^{T}\right\}  .$ The
sets of optimal solutions will be denoted $S_{P}^{opt}$ and $S_{D}^{opt},$
respectively. The optimal objective function values will be denoted $f^{\max}$
and $\varphi^{\min}.$

\begin{theorem}
[Weak Duality Theorem]\cite{hoffman} The inequality $c^{T}x\leq\pi^{T}b$ holds
for any $x\in S_{P}$ and $\pi\in S_{D}.$
\end{theorem}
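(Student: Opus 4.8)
The plan is to chain the two defining inequalities through a residuation-type argument, using only isotonicity \eqref{Isotonicity} and the associativity of max-algebra multiplication. Let $x\in S_P$ and $\pi\in S_D$ be arbitrary, so that $Ax\le b$ and $\pi^T A\ge c^T$. First I would left-multiply the primal constraint $Ax\le b$ by the row vector $\pi^T$; by isotonicity \eqref{Isotonicity} this gives $\pi^T(Ax)\le \pi^T b$, i.e.\ $\varphi(\pi)=\pi^T b\ge \pi^T A x$ after using associativity to regroup $\pi^T(Ax)=(\pi^T A)x$.

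Next I would use the dual constraint $\pi^T A\ge c^T$: right-multiplying by the vector $x$ and again invoking isotonicity \eqref{Isotonicity} yields $(\pi^T A)x\ge c^T x=f(x)$. Combining the two displayed inequalities, $f(x)=c^T x\le (\pi^T A)x=\pi^T(Ax)\le \pi^T b=\varphi(\pi)$, which is exactly the claimed inequality $c^T x\le \pi^T b$.

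I do not expect any real obstacle here: the only subtlety is that all operations are in max-algebra, so one must be careful that ``multiplication'' means the $\otimes$ operation and that isotonicity and associativity for $\otimes$ over $\overline{\mathbb{R}}$ have already been recorded in the preliminaries (they have, via \eqref{Isotonicity} and the remark preceding it). One should also note that since $x$ and $\pi$ are finite and $A,b,c$ are finite, all the intermediate quantities $\pi^T A$, $(\pi^T A)x$, $\pi^T b$, $c^T x$ are genuine real numbers, so the inequalities are between finite reals and there is no issue with $\varepsilon=-\infty$ appearing. The argument is a direct transcription of the classical weak duality proof for ordinary linear programming, with $+$ replaced by $\max$ and $\cdot$ replaced by $+$.
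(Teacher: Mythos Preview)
Your argument is correct and is essentially identical to the paper's own proof, which simply writes the single chain $c^{T}x\leq(\pi^{T}A)x=\pi^{T}(Ax)\leq\pi^{T}b$ citing isotonicity and associativity. You have merely spelled out the two applications of \eqref{Isotonicity} in more detail.
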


\begin{proof}
By isotonicity and associativity we have
\[
c^{T}x\leq\left(  \pi^{T}A\right)  x=\pi^{T}\left(  Ax\right)  \leq\pi^{T}b.
\]

\end{proof}

\begin{theorem}
[Strong Duality Theorem]\cite{superville} Optimal solutions to both (P) and
(D) exist and%
\[
\max_{x\in S_{P}}c^{T}x=\min_{\pi\in S_{D}}\pi^{T}b.
\]

\end{theorem}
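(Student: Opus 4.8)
The idea is to exhibit explicit primal and dual feasible solutions whose objective values coincide, and then invoke the Weak Duality Theorem to conclude that they are in fact optimal. The natural candidate on the primal side comes from residuation: since $S_P=\{x:Ax\le b\}$ is nonempty (it always contains, e.g., very negative vectors, and $A,b$ are finite) and has $A^{\#}b$ as its greatest element by~(\ref{residuation}), the vector $\bar x:=A^{\#}b$ is the obvious maximizer of any isotone objective $c^{T}x$ over $S_P$. So I would set $f^{\max}=c^{T}(A^{\#}b)$ and take $\bar x=A^{\#}b$ as the optimal primal solution; optimality among $S_P$ is immediate from isotonicity of $x\mapsto c^{T}x$ together with $x\le A^{\#}b$ for all $x\in S_P$.

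The harder half is the dual. I would guess the optimal dual vector in the dual-conjugate form analogous to the primal: the constraint $\pi^{T}A\ge c^{T}$ is, after transposing, $A^{T}\pi\ge c$, equivalently (reading the residuation~(\ref{residuation}) in reverse, i.e.\ $A^{T}\pi\ge c\Longleftrightarrow \pi\ge (A^{T})^{\#,\prime}\text{-type bound}$) the set $S_D$ has a \emph{least} element, namely the one obtained by applying the min-algebra residual of $A^{T}$ to $c$. Concretely, using $A^{\#}=-A^{T}$ and $(A^{\#})^{\#}=A$ from~(\ref{A hash hash}), the least $\pi$ with $\pi^{T}A\ge c^{T}$ should be $\bar\pi:=(c^{T}A^{\#})^{\#}$ (a min-algebra product followed by conjugation), which lies in $S_D$ because applying $(\ \cdot\ )^{\#}$-residuation twice only increases the quantity, giving $\bar\pi^{T}A\ge c^{T}$. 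Since $\varphi(\pi)=\pi^{T}b$ is isotone in $\pi$, this least feasible $\bar\pi$ minimizes $\varphi$ over $S_D$, so $\varphi^{\min}=\bar\pi^{T}b$.

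It then remains to verify the single identity $c^{T}(A^{\#}b)=\bar\pi^{T}b$, i.e.\ that the primal optimum equals the dual optimum. This is the crux, and I expect it to be the main obstacle: it is a purely algebraic identity in the residuated pair $(\otimes,\otimes^{\prime})$ relating $c^{T}\otimes(A^{\#}\otimes b)$ to $\bigl((c^{T}\otimes^{\prime}A^{\#})^{\#}\bigr)^{T}\otimes b$, and proving it cleanly will use the conjugation rules~(\ref{AB hash})--(\ref{AdashB}), the adjacency identities~(\ref{u hash u})--(\ref{u u hash}), and associativity/isotonicity, perhaps together with a squeeze: show ``$\le$'' directly from Weak Duality applied to $(\bar x,\bar\pi)$, and ``$\ge$'' from a direct expansion of both sides as $\max_i$ of $\min_j$ expressions in the finite entries $a_{ij},b_i,c_j$. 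Once that equality is in hand, Weak Duality gives $c^{T}x\le\pi^{T}b$ for all feasible $x,\pi$, so $c^{T}\bar x=\bar\pi^{T}b$ forces $\bar x\in S_P^{opt}$, $\bar\pi\in S_D^{opt}$, and $f^{\max}=\varphi^{\min}$, completing the proof.
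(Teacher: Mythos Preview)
Your primal half is fine and matches the paper: $\bar x=A^{\#}b$ is the greatest element of $S_P$ by residuation, hence optimal by isotonicity, and $f^{\max}=c^{T}(A^{\#}b)$.

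The dual half has a genuine gap. You assert that $S_D=\{\pi:\pi^{T}A\ge c^{T}\}$ has a \emph{least} element, obtained by a residuation dual to the primal one, and that isotonicity of $\varphi$ then finishes the job. But $S_D$ is defined by a \emph{super}-type constraint $A^{T}\pi\ge c$ in max-algebra, i.e.\ $\max_i(\pi_i+a_{ij})\ge c_j$ for each $j$; such a set is not closed under componentwise minimum and in general has no least element. (Take $m=2$, $n=1$, $A=(0,0)^{T}$, $c=0$: both $(0,-5)^{T}$ and $(-5,0)^{T}$ are feasible, their meet $(-5,-5)^{T}$ is not.) Worse, your candidate $\bar\pi=(c^{T}\otimes' A^{\#})^{\#}$, which gives $\bar\pi_i=\max_j(a_{ij}-c_j)$, need not be feasible at all: with $m=n=1$, $A=[-10]$, $c=[0]$ one gets $\bar\pi=-10$ and $\bar\pi^{T}A=-20<0=c^{T}$. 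So the ``least element $+$ isotonicity'' strategy cannot work here; residuation only gives you a greatest element on the $\le$-side, not a least one on the $\ge$-side.

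The paper avoids this trap entirely. It does \emph{not} look for the smallest dual feasible vector; instead it writes down $\bar\pi^{T}=t\,b^{\#}$ with $t=c^{T}(A^{\#}b)$, and checks two things directly: feasibility $\bar\pi^{T}A\ge c^{T}$ using the identity $t\,(b^{\#}A)=c^{T}(A^{\#}b)(A^{\#}b)^{\#}\ge c^{T}I=c^{T}$ via~(\ref{AdashB}) and~(\ref{u u hash}); and $\bar\pi^{T}b=t$ via $b^{\#}b=0$ from~(\ref{u hash u}). Weak Duality then makes both $\bar x$ and $\bar\pi$ optimal. The key idea you are missing is this explicit choice of dual vector tied to $b^{\#}$ and the scalar $t$, which makes $\varphi(\bar\pi)=t$ automatic rather than something to be squeezed out.
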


We first prove a lemma:

\begin{lemma}
$A^{\#}b\in S_{P}^{opt}$ and hence $f^{\max}=c^{T}\left(  A^{\#}b\right)  .$
\end{lemma}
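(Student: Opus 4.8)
The plan is to recall that by residuation (\ref{residuation}), $A x \leq b$ holds if and only if $x \leq A^{\#}b$, so $A^{\#}b$ is the greatest element of $S_P$. First I would verify that $A^{\#}b$ is actually in $S_P$, i.e.\ that $A(A^{\#}b) \leq b$; this is immediate from the Galois connection (apply the forward implication of (\ref{residuation}) to $x = A^{\#}b$, which satisfies $x \leq A^{\#}b$ trivially). Since $A$ and $b$ are finite, $A^{\#}b = -A^T \otimes' b$ is finite as well, so $A^{\#}b \in \mathbb{R}^n$ and thus $A^{\#}b \in S_P$, in particular $S_P \neq \emptyset$.

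Next I would show optimality. Take any $x \in S_P$. Then $A x \leq b$, so by residuation $x \leq A^{\#}b$. Now apply isotonicity (\ref{Isotonicity}): multiplying the inequality $x \leq A^{\#}b$ on the left by the row vector $c^T$ (a $1 \times n$ matrix) gives $c^T x \leq c^T(A^{\#}b)$. Since $x \in S_P$ was arbitrary, this shows $c^T(A^{\#}b) = \max_{x \in S_P} c^T x = f^{\max}$, and hence $A^{\#}b \in S_P^{opt}$.

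I don't expect a genuine obstacle here: the whole argument is a two-line consequence of the residuation property (\ref{residuation}) together with isotonicity, both already established in the excerpt, plus the observation that finiteness of $A$ and $b$ forces $A^{\#}b$ to be finite (so that it is a legitimate point of $S_P \subseteq \mathbb{R}^n$ rather than just of $\overline{\mathbb{R}}^n$). If anything needs a word of care, it is precisely this finiteness remark — one should note that each entry of $A^{\#}b$ is $\min_i(b_i - a_{ij})$, a minimum of finitely many real numbers, hence real. The statement $f^{\max} = c^T(A^{\#}b)$ then follows directly, and this lemma will presumably feed into the proof of the Strong Duality Theorem by exhibiting a concrete optimal primal solution whose value can be matched by a dual feasible point.
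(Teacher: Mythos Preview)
Your proof is correct and follows essentially the same approach as the paper: residuation gives $x\in S_P \Longleftrightarrow x\leq A^{\#}b$, and then isotonicity of $\otimes$ yields optimality. You add an explicit check that $A^{\#}b$ is finite, which the paper leaves implicit, but otherwise the arguments coincide.
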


\begin{proof}
By residuation (\ref{residuation}) we have%
\[
x\in S_{P}\Longleftrightarrow x\leq A^{\#}b
\]
and the rest follows by isotonicity of $\otimes.$
\end{proof}

\begin{proof}
(Of strong duality.) Let us denote $t=c^{T}\left(  A^{\#}b\right)  $ and set
$\pi^{T}=tb^{\#}.$ It remains to prove that $\pi$ is dual feasible and
$\pi^{T}b=t.$ Using associativity and isotonicity (\ref{Isotonicity}) and also
(\ref{AdashB}) and (\ref{u u hash}) we have%
\begin{align*}
\pi^{T}A  & =\left(  t\otimes b^{\#}\right)  \otimes A\\
& =t\otimes\left(  b^{\#}\otimes A\right) \\
& =c^{T}\otimes\left(  A^{\#}\otimes^{\prime}b\right)  \otimes\left(
A^{\#}\otimes^{\prime}b\right)  ^{\#}\\
& \geq c^{T}\otimes I\\
& =c^{T}.
\end{align*}
On the other hand (using (\ref{u hash u}))%
\[
\pi^{T}b=\left(  t\otimes b^{\#}\right)  \otimes b=t\otimes\left(
b^{\#}\otimes b\right)  =t\otimes0=t,
\]
which completes the proof.
\end{proof}

It follows that there is no duality gap for the pair of programs (P) and (D).
Their optimal solutions are $\overline{x}=A^{\#}b$ (no matter what $c$ is) and
$\overline{\pi}^{T}=c^{T}\left(  A^{\#}b\right)  b^{\#},$ respectively. Their
common objective function value is $c^{T}\left(  A^{\#}b\right)  .$ If $A,b,c$
are integer then there is also no duality gap for the corresponding pair of
integer programs since then both $\overline{x}$ and $\overline{\pi}$ are
integer. However, this is not the case when some of $A,b,c$ are non-integer
and the question of a duality gap arises. This will be discussed in the next section.

\section{Dual integer programs\label{Sec Integer duality}}

\bigskip Let $A=\left(  a_{ij}\right)  \in\mathbb{R}^{m\times n},b=\left(
b_{1},...,b_{m}\right)  ^{T}\in\mathbb{R}^{m},c=\left(  c_{1},...,c_{n}%
\right)  ^{T}\in\mathbb{R}^{n}$ and consider the following pair of tropical
integer programs:%
\begin{equation}
\left.
\begin{array}
[c]{c}%
f\left(  x\right)  =c^{T}x\rightarrow\max\\
s.t.\\
Ax\leq b\\
x\in\mathbb{Z}^{n}%
\end{array}
\right\} \tag*{(PI)}%
\end{equation}
\label{(P) copy(1)}

and%
\begin{equation}
\left.
\begin{array}
[c]{c}%
\varphi\left(  \pi\right)  =\pi^{T}b\rightarrow\min\\
s.t.\\
\pi^{T}A\geq c^{T}\\
\pi\in\mathbb{Z}^{m}%
\end{array}
\right\} \tag*{(DI)}%
\end{equation}

\bigskip Let us denote $S_{PI}=\left\{  x\in\mathbb{Z}^{n}:Ax\leq b\right\}  $
and $S_{DI}=\left\{  \pi\in\mathbb{Z}^{m}:\pi^{T}A\geq c^{T}\right\}  .$ The
sets of optimal solutions will be denoted $S_{PI}^{opt}$ and $S_{DI}^{opt},$
respectively. The optimal objective function values will be denoted
$f_{I}^{\max}$ and $\varphi_{I}^{\min}.$

It follows immediately from residuation that $S_{PI}=\left\{  x\in
\mathbb{Z}^{n}:x\leq\left\lfloor A^{\#}b\right\rfloor \right\}  .$ \ Hence by
isotonicity $\left\lfloor A^{\#}b\right\rfloor \in S_{PI}^{opt}$ and
$f_{I}^{\max}=c^{T}\left\lfloor A^{\#}b\right\rfloor .$

Solving (DI) is less straightforward but can be done directly when
$b\in\mathbb{Z}^{m}.$ This will be shown below, but first we answer a
principle question for general (real) $A,b$ and $c.$

\begin{proposition}
\label{Prop Opt sol exists}$S_{DI}^{opt}\neq\emptyset.$
\end{proposition}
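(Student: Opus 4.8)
The plan is to show that the dual integer feasible set $S_{DI}$ is nonempty and that the infimum of $\varphi$ over it is attained. For nonemptiness, recall that by strong duality (applied to the real programs (P) and (D)) we already have an explicit real dual feasible vector $\overline{\pi}^{T}=c^{T}(A^{\#}b)b^{\#}$, so $S_D\neq\emptyset$; the issue is only to produce an \emph{integer} point of $S_D$. Here I would use the fact that the constraint $\pi^{T}A\geq c^{T}$ is \emph{upward closed}: if $\pi\in S_D$ and $\pi'\geq\pi$ then $\pi'^{T}A\geq\pi^{T}A\geq c^{T}$ by isotonicity (\ref{Isotonicity}). Hence rounding $\overline{\pi}$ \emph{up} componentwise gives $\lceil\overline{\pi}\rceil\in S_{DI}$, so $S_{DI}\neq\emptyset$.

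Next I would show the minimum is attained. Since $\pi\mapsto\pi^{T}b=\max_i(\pi_i+b_i)$ is a continuous (indeed piecewise-linear) function bounded below on $S_{DI}$ by weak duality (any real dual value is $\geq f^{\max}$, hence $\geq f_I^{\max}$ too, though boundedness below already follows from $\varphi(\pi)\geq f(\lfloor A^\#b\rfloor)$ via the weak duality inequality restricted to integer points), it suffices to exhibit a \emph{bounded} subset of $S_{DI}$ over which the infimum is already achieved; then, that subset being a bounded subset of $\mathbb{Z}^m$, it is finite, and a minimum exists. The natural candidate is to intersect $S_{DI}$ with the box $\{\pi: \pi\leq\lceil\overline{\pi}\rceil\}$: on one hand $\lceil\overline{\pi}\rceil$ itself lies in this set, so it is nonempty; on the other hand, I claim any integer dual feasible $\pi$ can be replaced by one in the box without increasing $\varphi$. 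Indeed set $\pi':=\pi\oplus'\lceil\overline{\pi}\rceil$ (componentwise minimum, legitimate by the min-algebra notation already introduced); then $\pi'\leq\pi$ gives $\varphi(\pi')\leq\varphi(\pi)$, and $\pi'\in\mathbb{Z}^m$. The only thing to check is that $\pi'$ remains dual feasible, i.e.\ $\pi'^{T}A\geq c^{T}$.

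That last feasibility check is the main obstacle, because min-algebra and max-algebra interact badly in general: taking componentwise minima of two feasible $\pi$'s need not preserve $\pi^{T}A\geq c^{T}$. The resolution is to observe that $S_D$ does have a \emph{least element}, namely $\overline{\pi}$, by residuation: $\pi^{T}A\geq c^{T}$ is equivalent, after transposing and using the conjugation identities (\ref{A hash hash})--(\ref{residuation}), to $\pi\geq (A^{T})^{\#}{}'c$ — more precisely, $\pi^T A\ge c^T \iff A^T\pi\ge c$ is the wrong direction; rather one uses $\pi^{T}A\geq c^{T}\iff A^{\#}\pi\le\text{(something)}$ is not it either, so instead I would argue directly: $S_D=\{\pi:\pi\geq\pi^{0}\}$ for the explicit $\pi^{0}$ obtained by residuating the system $\pi^{T}A\geq c^{T}$ (equivalently $-A^{T}\pi\le -c$, whose greatest solution is $(-A^T)^{\#}(-c)$; negating, the \emph{least} $\pi$ with $\pi^T A\ge c^T$ is $\pi^0 = -((-A^T)^\#(-c))$, a finite vector). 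Granting this structural fact, $S_{DI}=\{\pi\in\mathbb{Z}^m:\pi\geq\lceil\pi^{0}\rceil\}$, which is upward closed \emph{and} has least element $\lceil\pi^{0}\rceil$; hence $\varphi$ restricted to $S_{DI}$ attains its minimum exactly at $\lceil\pi^{0}\rceil$ by isotonicity of $b^{\#}$-multiplication, and $\varphi_I^{\min}=\lceil\pi^0\rceil^T b$. This gives $S_{DI}^{opt}\ni\lceil\pi^{0}\rceil$, so $S_{DI}^{opt}\neq\emptyset$, completing the proof; the subsequent algorithmic discussion presumably refines how $\pi^{0}$ and this optimum are computed.
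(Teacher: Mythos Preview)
Your nonemptiness argument for $S_{DI}$ (round the real dual optimum $\overline\pi$ up componentwise and invoke isotonicity) is correct. The gap is in the second half: you assert that $S_D=\{\pi:\pi\ge\pi^{0}\}$ for some finite $\pi^{0}$, i.e.\ that the real dual feasible set has a \emph{least element}. This is false in general. The constraint $\pi^{T}A\ge c^{T}$ reads, columnwise, $\max_i(\pi_i+a_{ij})\ge c_j$; for each $j$ it suffices that \emph{some} $i$ satisfy $\pi_i\ge c_j-a_{ij}$. These are disjunctive constraints, and $S_D$ is typically a union of translated orthants, not a single one. Concretely, take $m=2$, $n=1$, $A=\binom{0}{0}$, $c=(0)$: then $S_D=\{\pi:\max(\pi_1,\pi_2)\ge 0\}$, and both $(0,-100)$ and $(-100,0)$ belong to $S_D$ while their componentwise minimum $(-100,-100)$ does not. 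Your residuation manoeuvre fails exactly here: negating $A^{T}\pi\ge c$ turns the max into a min, since $-\max_i(a_{ij}+\pi_i)=\min_i(-a_{ij}-\pi_i)$, which is not a max-plus product, so (\ref{residuation}) is inapplicable. Consequently the closed form $\varphi_I^{\min}=\lceil\pi^{0}\rceil^{T}b$ is unsupported --- and indeed the entire Section~\ref{Sec general dual} is devoted to an algorithm for computing $\varphi_I^{\min}$, which would be superfluous if your formula held.

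The paper's proof sidesteps the structure of $S_D$ altogether by a compactness argument. Choose any $\pi_0\in S_{DI}$ and set $U=\varphi(\pi_0)$; every $\pi$ with $\pi_i>U-b_i$ for some $i$ has $\varphi(\pi)>U$ and may be discarded. For the lower side, let $L$ be the lower bound on $\varphi$ supplied by weak duality; any $\pi\in S_{DI}$ with $\pi_i<L-b_i$ for some $i$ can have that component raised (upward closure of $S_{DI}$) without increasing $\varphi$, since $\varphi(\pi)\ge L$ already. Thus the search may be confined to the box $B^{-1}L\le\pi\le B^{-1}U$, which contains only finitely many integer points, and the minimum is attained.
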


\begin{proof}
Since $S_{DI}$ is a closed, non-empty set and $\varphi$ is continuous and
bounded below due to the Weak Duality Theorem we only need to prove that
$S_{DI}$ can be restricted to a bounded set without affecting $\varphi
_{I}^{\min}.$ Let $U=\varphi\left(  \pi_{0}\right)  $ for an arbitrarily
chosen $\pi_{0}\in S_{DI}$ and $L$ be any lower bound following from the Weak
and Strong Duality Theorems. Then disregarding $\pi\in S_{DI}$ with $\pi
_{i}>U-b_{i}$ for at least one $i$ will have no affect on $\varphi_{I}^{\min
}.$ Similarly disregarding those $\pi$ (if any) where $\pi_{i}<L-b_{i}$ for at
least one $i$. Hence for solving (DI) we\ may assume without loss of
generality that%
\[
B^{-1}L\leq\pi\leq B^{-1}U,
\]
where $B=diag\left(  b\right)  .$ The feasible set in (DI) is now restricted
to a compact set and the statement follows.
\end{proof}

We will transform (DI) to an equivalent "normalised" tropical integer program.
Let us denote $B=diag\left(  b\right)  ,$ $C=diag\left(  c\right)  $ and
$\sigma^{T}=\pi^{T}B.$ Hence $\pi^{T}=\sigma^{T}B^{-1},\pi^{T}b=\sigma^{T}0$
and the inequality in (DI) is equivalent to%
\begin{equation}
\sigma^{T}B^{-1}AC^{-1}\geq0\label{eq   new ineq}%
\end{equation}
or, component-wise:%
\[
\max_{i}\left(  \sigma_{i}-b_{i}+a_{ij}-c_{j}\right)  \geq0\text{ \ for all
}j.
\]
Let us denote the matrix $B^{-1}AC^{-1}$ by $D.$ The new tropical integer
program is%
\begin{equation}
\left.
\begin{array}
[c]{c}%
\varphi^{\prime}\left(  \sigma\right)  =\sigma^{T}0\rightarrow\min\\
s.t.\\
\sigma^{T}D\geq0^{T}\\
\sigma\in\mathbb{Z}^{m}.
\end{array}
\right\} \tag{DI'}%
\end{equation}

\begin{proposition}
If $b\in\mathbb{Z}^{m}$ then (DI) and (DI') are equivalent and a one-to-one
correspondence between feasible solutions of these problems is given by
$\pi^{T}=\sigma^{T}B^{-1}.$
\end{proposition}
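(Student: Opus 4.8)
The plan is to verify three things about the substitution $\sigma^{T}=\pi^{T}B$: that it restricts to a bijection of $\mathbb{Z}^{m}$ onto itself, that it carries feasible solutions of (DI) to feasible solutions of (DI') and back, and that it leaves the objective value unchanged. The equivalence of the two integer programs and the claimed one-to-one correspondence then follow at once.

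First I would observe that, since $b\in\mathbb{Z}^{m}$ (and $b$ is finite), the diagonal matrix $B=diag\left(  b\right)  $ and its inverse $B^{-1}=diag\left(  b_{1}^{-1},\dots,b_{m}^{-1}\right)  $ both have integer entries. Hence the mutually inverse maps $\pi\mapsto\sigma$, $\sigma^{T}=\pi^{T}B$, and $\sigma\mapsto\pi$, $\pi^{T}=\sigma^{T}B^{-1}$, both send integer vectors to integer vectors (componentwise a product is a maximum of finite integer sums), so together they give a bijection of $\mathbb{Z}^{m}$ onto $\mathbb{Z}^{m}$. This is the one step where the hypothesis $b\in\mathbb{Z}^{m}$ is genuinely used: for non-integer $b$ the correspondence would not preserve the integer lattices and the two programs would in general fail to be equivalent.

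Next I would translate the constraints. Substituting $\pi^{T}=\sigma^{T}B^{-1}$ into $\pi^{T}A\geq c^{T}$ gives $\sigma^{T}B^{-1}A\geq c^{T}$. Post-multiplying by the finite diagonal (hence invertible) matrix $C^{-1}=diag\left(  c_{1}^{-1},\dots,c_{n}^{-1}\right)  $ and using isotonicity (\ref{Isotonicity}) yields $\sigma^{T}B^{-1}AC^{-1}\geq c^{T}C^{-1}$; since $c^{T}C^{-1}=0^{T}$ and $B^{-1}AC^{-1}=D$, this is exactly $\sigma^{T}D\geq0^{T}$. Conversely, post-multiplying $\sigma^{T}D\geq0^{T}$ by $C$ and using $C^{-1}C=I$ together with $0^{T}C=c^{T}$ recovers $\sigma^{T}B^{-1}A\geq c^{T}$, i.e. $\pi^{T}A\geq c^{T}$. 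Thus $\pi\in S_{DI}$ if and only if the corresponding $\sigma$ is feasible for (DI').

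Finally I would compare objective values: $\varphi\left(  \pi\right)  =\pi^{T}b=\sigma^{T}B^{-1}b=\sigma^{T}0=\varphi^{\prime}\left(  \sigma\right)  $, using that $B^{-1}b$ is the all-zeros vector because its $i$-th entry is $b_{i}^{-1}\otimes b_{i}=0$. Hence the bijection above preserves the objective, so it restricts to a bijection between the optimal solution sets of the two problems and $\varphi_{I}^{\min}$ equals the optimal value of (DI'). I do not expect any real obstacle here; the only point that needs care is the integrality bookkeeping in the second paragraph, which is precisely what forces the assumption $b\in\mathbb{Z}^{m}$.
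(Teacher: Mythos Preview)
Your proof is correct and follows essentially the same approach as the paper: the paper's one-line proof simply observes that integrality of $\pi$ is equivalent to integrality of $\sigma$ when $b\in\mathbb{Z}^{m}$, deferring the constraint and objective equivalences to the discussion preceding the proposition, which is exactly what you spell out in your second and third paragraphs. Your version is more self-contained but not genuinely different.
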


\begin{proof}
If $b\in\mathbb{Z}^{m}$ then $\pi$ is integer if and only if $\sigma$ is
integer; the rest follows from the previous discussion.
\end{proof}

\begin{proposition}
\bigskip\label{Prop Constant vector}If%
\[
t=\min_{\sigma\in S_{DI^{\prime}}}\varphi^{\prime}\left(  \sigma\right)
\]
then $\sigma_{0}=\left(  t,...,t\right)  ^{T}$ is an optimal solution to (DI').
\end{proposition}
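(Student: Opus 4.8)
The plan is to show that the constant vector $\sigma_0=(t,\dots,t)^T$ is both feasible for (DI') and attains the optimal value $t$. Feasibility is the substantive part: I must show $\sigma_0^T D\geq 0^T$, i.e. for every column $j$ of $D=B^{-1}AC^{-1}$ we have $\max_i(t+d_{ij})\geq 0$, equivalently $t\geq -\max_i d_{ij}$ for all $j$, equivalently $t\geq -\min_j\max_i d_{ij}$. So the key is to prove that the \emph{integer} optimum $t$ of (DI') is at least $-\min_j\max_i d_{ij}$, which is exactly the statement that the non-integer lower bound $-\max_i d_{ij}$ in each coordinate $j$ cannot be beaten even after taking floors; more precisely I want to compare $t$ with the quantity governing the real relaxation.

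First I would record what the real (non-integer) version of (DI') gives. By the no-duality-gap discussion following the Strong Duality Theorem applied to $D$ in place of $A$, with $b\to 0$ and $c\to 0$, the real program $\sigma^T 0\to\min$ s.t. $\sigma^T D\geq 0^T$ has optimal value $0^T\lfloor D^{\#}0\rfloor$ in the integer case — but more usefully, for \emph{any} feasible $\sigma$ of (DI') we have, column by column, $\max_i\sigma_i\geq \max_i(\sigma_i+d_{ij})-\max_i d_{ij}\geq -\max_i d_{ij}$ using $\sigma^T D\geq 0$. Hence $\varphi'(\sigma)=\max_i\sigma_i\geq \max_j(-\max_i d_{ij})=-\min_j\max_i d_{ij}$ for every feasible $\sigma$, so in particular $t\geq -\min_j\max_i d_{ij}$. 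This inequality is precisely the feasibility condition $\max_i(t+d_{ij})\geq 0$ for all $j$ derived above, so $\sigma_0$ is feasible.

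Next, $\sigma_0$ is integer because $t$ is: indeed $t=\varphi'(\sigma^\ast)$ for some optimal integer $\sigma^\ast$, and $\varphi'(\sigma^\ast)=\max_i\sigma^\ast_i\in\mathbb{Z}$. Finally $\varphi'(\sigma_0)=\max_i t=t$, which equals the optimal value; combined with feasibility this shows $\sigma_0\in S_{DI'}^{opt}$.

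The main obstacle is the feasibility argument, i.e.\ establishing the inequality $\varphi'(\sigma)\geq -\min_j\max_i d_{ij}$ for all feasible $\sigma$ in a clean max-algebraic form — the temptation is to invoke the Strong Duality / residuation machinery for the matrix $D$, but one must be careful that the bound obtained there is genuinely $-\min_j\max_i d_{ij}$ (this is $0^T(D^{\#}0)$ written out, since $D^{\#}0=-\max_i d_{ij}$ in coordinate $j$ and then premultiplying by $0^T$ in max-algebra takes the max over $j$). Once this identification is made, the rest is immediate from isotonicity and the fact that $\varphi'$ is constant-insensitive in the sense that $\varphi'(\sigma_0)=t$.
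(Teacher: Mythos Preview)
Your argument is correct, but it is considerably more elaborate than what the paper does. The paper's proof is a one-liner based on isotonicity: take any optimal $\tilde{\sigma}\in S_{DI'}$ with $\varphi'(\tilde{\sigma})=t$; since $t=\max_i\tilde{\sigma}_i$, the constant vector $\sigma_0=(t,\dots,t)^T$ satisfies $\sigma_0\geq\tilde{\sigma}$ componentwise, and hence $\sigma_0^T D\geq\tilde{\sigma}^T D\geq 0^T$ by isotonicity of the tropical product. Integrality of $t$ and $\varphi'(\sigma_0)=t$ are then immediate, exactly as you observe.

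By contrast, you prove feasibility of $\sigma_0$ by first computing the real-relaxation lower bound $\varphi'(\sigma)\geq -\min_j\max_i d_{ij}$ for all feasible $\sigma$ (via the sub-additivity estimate $\max_i(\sigma_i+d_{ij})\leq\max_i\sigma_i+\max_i d_{ij}$), and then noticing that this bound is precisely the threshold that makes the constant vector $(t,\dots,t)^T$ feasible. This works, and has the side benefit of explicitly identifying the continuous optimum $c^T(A^{\#}b)$ in normalised form---something the paper derives separately in the paragraph following the proposition. But the detour through the explicit bound is unnecessary for the proposition itself: the paper's monotonicity step $\sigma_0\geq\tilde{\sigma}\Rightarrow\sigma_0^T D\geq\tilde{\sigma}^T D$ gets feasibility for free without ever looking at the entries of $D$.
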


\begin{proof}
\bigskip Let $t=\min_{\sigma\in S_{DI^{\prime}}}\varphi^{\prime}\left(
\sigma\right)  =\varphi^{\prime}\left(  \widetilde{\sigma}\right)  $ for some
$\widetilde{\sigma}\in S_{DI^{\prime}}.$ Then $t=\widetilde{\sigma}^{T}%
0=\max\left(  \widetilde{\sigma}_{1},...,\widetilde{\sigma}_{m}\right)
\in\mathbb{Z}$ and so we have $\sigma_{0}\geq\widetilde{\sigma},$ hence
$\sigma_{0}^{T}D\geq\widetilde{\sigma}^{T}D\geq0^{T},\sigma_{0}\in
\mathbb{Z}^{m}$ and $\varphi^{\prime}\left(  \sigma_{0}\right)  =t.$ The
statement follows.
\end{proof}

By Proposition \ref{Prop Constant vector} we may restrict our attention to
constant vectors when searching for optimal solutions of (DI'). If $\sigma
^{T}=\left(  s,...,s\right)  $ then (\ref{eq new ineq}) reads%

\[
\max_{i}\left(  s-b_{i}+a_{ij}-c_{j}\right)  \geq0\text{ \ for all }j,
\]
equivalently,%
\[
s\geq\min_{i}\left(  b_{i}-a_{ij}+c_{j}\right)  \text{ \ for all }j,
\]
or,%
\[
s\geq\max_{j}\min_{i}\left(  b_{i}-a_{ij}+c_{j}\right)  .
\]
This can also be written as%
\begin{align*}
s  & \geq\max_{j}\left(  c_{j}+\min_{i}\left(  b_{i}-a_{ij}\right)  \right) \\
& =\max_{j}\left(  c_{j}+\min_{i}\left(  a_{ji}^{\#}+b_{i}\right)  \right) \\
& =c^{T}\left(  A^{\#}b\right)  .
\end{align*}

Since $t$ is the minimal possible integer value of $s,$ we have%
\[
t=\left\lceil c^{T}\left(  A^{\#}b\right)  \right\rceil .
\]
We have proved:

\begin{proposition}
If $b\in\mathbb{Z}^{m}$ then $\min_{\sigma\in S_{DI^{\prime}}}\varphi^{\prime
}\left(  \sigma\right)  =\left\lceil c^{T}\left(  A^{\#}b\right)  \right\rceil
$ and $\sigma=\left(  t,...,t\right)  ^{T}$ is an optimal solution of (DI'),
where $t=\left\lceil c^{T}\left(  A^{\#}b\right)  \right\rceil $.
\end{proposition}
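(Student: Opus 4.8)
The plan is to assemble the statement directly from the chain of equivalences already worked out in the paragraph preceding it, so that the proof is essentially a matter of recording which facts feed into which step. I would first invoke Proposition \ref{Prop Constant vector} to reduce the minimization over all of $S_{DI^{\prime}}$ to a minimization over constant vectors $\sigma^{T}=(s,\dots,s)$; this is legitimate because that proposition guarantees the constant vector $(t,\dots,t)^{T}$ (with $t$ the optimal value) is itself feasible and optimal, so nothing is lost by restricting the search.

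Next I would carry out the feasibility analysis for a constant vector. Substituting $\sigma^{T}=(s,\dots,s)$ into the constraint $\sigma^{T}D\geq0^{T}$, i.e. $\sigma^{T}B^{-1}AC^{-1}\geq 0^{T}$, and reading it component-wise gives $\max_{i}(s-b_{i}+a_{ij}-c_{j})\geq 0$ for all $j$. Rearranging the max-inequality into a min-inequality term by term yields $s\geq\min_{i}(b_{i}-a_{ij}+c_{j})$ for each $j$, hence $s\geq\max_{j}\min_{i}(b_{i}-a_{ij}+c_{j})$. Pulling out the $c_{j}$ and rewriting $-a_{ij}$ as the conjugate entry $a_{ji}^{\#}$ identifies the right-hand side as exactly $c^{T}(A^{\#}b)$, using the definitions of $\otimes^{\prime}$-product and of $A^{\#}=-A^{T}$. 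So the set of feasible real $s$ is precisely $\{s\in\mathbb{R}:s\geq c^{T}(A^{\#}b)\}$.

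Finally, since we want the \emph{integer} program, $s$ ranges over $\mathbb{Z}$, and the smallest integer at least $c^{T}(A^{\#}b)$ is $\left\lceil c^{T}(A^{\#}b)\right\rceil$. Setting $t=\left\lceil c^{T}(A^{\#}b)\right\rceil$, Proposition \ref{Prop Constant vector} then tells us that $(t,\dots,t)^{T}$ is an optimal solution of (DI') and that $\min_{\sigma\in S_{DI^{\prime}}}\varphi^{\prime}(\sigma)=t$, which is the claim. I do not anticipate any genuine obstacle here: every step is either a definitional rewrite or a direct citation of Proposition \ref{Prop Constant vector}. The only point needing a word of care is that $c^{T}(A^{\#}b)$ is guaranteed finite (all of $A,b,c$ are finite by the standing assumption), so the ceiling is well defined; and that Proposition \ref{Prop Constant vector} does apply, which requires knowing the minimum $t$ is attained — but that is exactly what Proposition \ref{Prop Opt sol exists} and the preceding reduction have already established.
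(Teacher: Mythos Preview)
Your proposal is correct and follows essentially the same route as the paper: the paper's proof is precisely the discussion immediately preceding the proposition (the paper writes ``We have proved:'' before stating it), consisting of the reduction to constant vectors via Proposition~\ref{Prop Constant vector}, the componentwise rewriting of the feasibility constraint for $(s,\dots,s)$ down to $s\geq c^{T}(A^{\#}b)$, and then taking the least integer $s$. Your only additions are the explicit mention of Proposition~\ref{Prop Opt sol exists} to guarantee attainment and the remark on finiteness of $c^{T}(A^{\#}b)$, both of which are harmless clarifications.
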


We also conclude that $\pi^{T}=t0^{T}B^{-1}=tb^{\#}$ is an optimal solution of
(DI) with $\varphi_{I}^{\min}=\pi^{T}b=t\otimes b^{\#}\otimes b=t0=t.$
Therefore the duality gap for the pair (PI) - (DI) when $b\in\mathbb{Z}^{m}$
is the interval%
\[
\left(  c^{T}\left\lfloor A^{\#}b\right\rfloor ,\left\lceil c^{T}\left(
A^{\#}b\right)  \right\rceil \right)  .
\]

Solution of dual integer programs without the assumption $b\in\mathbb{Z}^{m} $
is presented in Chapter \ref{Sec general dual}.

\section{Using duality for solving a two-sided linear program}

We will now use the results of Section \ref{Sec LP duality} to directly solve
a special tropical linear program with two-sided constraints.

Consider the two-sided tropical linear program%
\begin{equation}
\left.
\begin{array}
[c]{c}%
g\left(  y\right)  =c^{T}y\rightarrow\min\\
s.t.\\
Ay\oplus d\leq y\\
y\in\mathbb{R}^{n}%
\end{array}
\right\} \tag*{(TSLP)}%
\end{equation}
where $A\in\mathbb{R}^{n\times n}$ and $d\in\mathbb{R}^{n}.$ The inequality
$Ay\oplus d\leq y$ is equivalent to the following system of inequalities:%
\begin{align*}
Ay  & \leq y,\\
d  & \leq y.
\end{align*}
The first of these inequalities $Ay\leq y$ describes the set of (finite)
subeigenvectors of $A$ (see Section \ref{Sec Prelim}) corresponding to
$\lambda=0,$ that is $V_{\ast}(A,0).$ By Theorem
\ref{Th Set of subeigenvectors} this set is non-empty if and only if
$\lambda\left(  A\right)  \leq0.$ Therefore we suppose now that this condition
is satisfied. By homogeneity of $Ay\leq y$ we may assume that a subeigenvector
sufficiently large exists and in particular one that also satisfies $y\geq d.
$ This implies that the feasible set of TSLP is nonempty.

\bigskip Let us denote%
\[
S=\left\{  y\in\mathbb{R}^{n}:Ay\oplus d\leq y\right\}  .
\]
By Theorem \ref{Th Set of subeigenvectors} we have%
\[
S=\left\{  y=A^{\ast}u:u\in\mathbb{R}^{n},A^{\ast}u\geq d\right\}  ,
\]
where $A^{\ast}$ is the Kleene Star defined by (\ref{Def Delta}).

\bigskip Hence $g\left(  y\right)  =c^{T}A^{\ast}u$ and (TSLP) now reads (in
the form of a dual problem):%
\begin{equation}
\left.
\begin{array}
[c]{c}%
h\left(  u\right)  =u^{T}\left(  A^{\ast^{T}}c\right)  \rightarrow\min\\
s.t.\\
u^{T}A^{\ast^{T}}\geq d^{T}\\
u\in\mathbb{R}^{n}%
\end{array}
\right\} \tag*{(TSLP')}%
\end{equation}

\bigskip In order to use the results of Section \ref{Sec LP duality} we
substitute as follows: $\pi\rightarrow u,$ $\varphi\rightarrow h,$
$b\rightarrow A^{\ast^{T}}c,$ $A\rightarrow A^{\ast^{T}},$ $c\rightarrow d.$
This yields the vector%
\[
\overline{y}=A^{\ast}\overline{u},
\]
as an optimal solution of (TSLP) where%
\[
\overline{u}=d^{T}\otimes\left(  -A^{\ast}\otimes^{\prime}\left(  A^{\ast^{T}%
}\otimes c\right)  \right)  \otimes\left(  c^{\#}\otimes^{\prime}\left(
-A^{\ast}\right)  \right)  .
\]
The optimal objective function value is
\[
g^{\min}=d^{T}\otimes\left(  -A^{\ast}\otimes^{\prime}\left(  A^{\ast^{T}%
}\otimes c\right)  \right)  .
\]

Computationally the most demanding part here is the calculation of $A^{\ast} $
which is $O\left(  n^{3}\right)  $. \bigskip We conclude that (TSLP) can be
solved directly in $O\left(  n^{3}\right)  $ time.

We finish this section by a remark on a tropical linear program obtained from
TSLP by replacing the inequalities with equations:%

\begin{equation}
\left.
\begin{array}
[c]{c}%
g\left(  y\right)  =c^{T}y\rightarrow\min\\
s.t.\\
Ay\oplus d=y\\
y\in\mathbb{R}^{n}%
\end{array}
\right\} \tag*{(TSLP2)}%
\end{equation}
It is known \cite{BSS} that if we denote%
\[
S=\left\{  y:Ay\oplus d=y\right\}
\]
then assuming $\lambda\left(  A\right)  \leq0$ again we have%
\[
S=\left\{  y=v\oplus A^{\ast}d:Av=v\right\}
\]
and thus%
\[
\min_{y\in S}g\left(  y\right)  =\min_{v\in\mathbb{R}^{n}}\left\{
c^{T}v\oplus c^{T}A^{\ast}d:Av=v\right\}  =c^{T}A^{\ast}d
\]
since for sufficiently small $v$ (which can be assumed by homogeneity of
$Av=v$) we have $c^{T}v\leq c^{T}A^{\ast}d.$

Note that if $\lambda\left(  A\right)  <0$ then $S=\left\{  A^{\ast}d\right\}
$ and so (TSLP2) has a non-trivial set of feasible solutions if and only if
$\lambda\left(  A\right)  =0.$

\section{An algorithm for general integer dual
programs\label{Sec general dual}}

The explicit solution of (DI) in Section \ref{Sec Integer duality} depends on
the assumption $b\in\mathbb{Z}^{m}.$ If $b$ is non-integer then Proposition
\ref{Prop Constant vector} is not available and it is not clear whether a
direct solution method can be produced. Therefore we now present an algorithm
for solving (DI) without any assumption on $b$ other than $b\in\mathbb{R}%
^{m}.$ Note that existence of a lower bound of the objective function follows
from the Weak Duality Theorem immediately.

First we repeat the transformation to a "normalised" tropical linear program
(DI'). As before we denote $B=diag\left(  b\right)  ,$ $C=diag\left(
c\right)  $ and $\sigma^{T}=\pi^{T}B.$ Hence $\pi^{T}=\sigma^{T}B^{-1},\pi
^{T}b=\sigma^{T}0$ and the inequality in (DI) is equivalent to%
\begin{equation}
\sigma^{T}B^{-1}AC^{-1}\geq0\label{eq   new ineq 2}%
\end{equation}
or, component-wise:%
\begin{equation}
\max_{i}\left(  \sigma_{i}-b_{i}+a_{ij}-c_{j}\right)  \geq0\text{ \ for all
}j.\label{eq  new ineq 3}%
\end{equation}
\ As before let us denote the matrix $B^{-1}AC^{-1}$ by $D.$ The new tropical
program is%
\begin{equation}
\left.
\begin{array}
[c]{c}%
\varphi^{\prime}\left(  \sigma\right)  =\sigma^{T}0\rightarrow\min\\
s.t.\\
\sigma^{T}D\geq0^{T}.
\end{array}
\right\} \tag{DI2'}%
\end{equation}
Now we cannot assume $\sigma\in\mathbb{Z}^{m}$ since the inverse
transformation $\pi^{T}=\sigma^{T}B^{-1}$ would not produce $\pi\in
\mathbb{Z}^{m}$ in general. It is also not sufficient to require\ $\sigma
\in\mathbb{R}^{m}$ in order to obtain $\pi\in\mathbb{Z}^{m}.$ However, since
$\sigma_{i}=\pi_{i}+b_{i},\pi_{i}\in\mathbb{Z}$ for every $i$ we have that
$\sigma$ should satisfy%
\[
fr\left(  \sigma_{i}\right)  =fr\left(  b_{i}\right)  \text{ \ for all }i.
\]
In order to meet this requirement we introduce (for a given $b\in
\mathbb{R}^{m}$) real functions $\left\lceil .\right\rceil ^{\left(  i\right)
}$ ($i=1,...,m$) as follows: if $x\in\mathbb{R}$ then $\left\lceil
x\right\rceil ^{\left(  i\right)  }$ is the least real number $u$ such that
$x\leq u$ and $fr\left(  u\right)  =fr\left(  b_{i}\right)  .$

Condition (\ref{eq new ineq 3}) can be stated as follows:%
\[
\left(  \forall j\right)  \left(  \exists i\right)  \left(  \sigma_{i}%
-b_{i}+a_{ij}-c_{j}\geq0\right)
\]
or, equivalently%
\[
\left(  \forall j\right)  \left(  \exists i\right)  \left(  \sigma_{i}\geq
b_{i}-a_{ij}+c_{j}\right)
\]
and taking into account desired integrality of $\pi:$%
\begin{equation}
\left(  \forall j\right)  \left(  \exists i\right)  \left(  \sigma_{i}%
\geq\left\lceil -d_{ij}\right\rceil ^{\left(  i\right)  }\right)
,\label{eq  condition 1}%
\end{equation}
where $D=\left(  d_{ij}\right)  .$ Because of the minimization of the
objective function every component $\sigma_{i}$ of an optimal solution
$\sigma$ may be assumed to be actually equal to $\left\lceil -d_{ij}%
\right\rceil ^{\left(  i\right)  }$ for at least one $j$ or to $\left\lceil
L\right\rceil ^{\left(  i\right)  }$ where $L$ is any lower bound of
$\varphi^{\prime}\left(  \sigma\right)  $ in (DI2'). Conversely any $\sigma$
satisfying (\ref{eq condition 1}) where for every $i$ equality is attained for
at least one $j$ or $\sigma_{i}=\left\lceil L\right\rceil ^{\left(  i\right)
}$ produces an integer solution $\pi$ of (DI) using the transformation
$\pi^{T}=\sigma^{T}B^{-1}.$

Let us denote for $\sigma\in\mathbb{R}^{m}$ and $i=1,...,m:$%
\[
N_{i}\left(  \sigma\right)  =\left\{  j\in N:\sigma_{i}\geq\left\lceil
-d_{ij}\right\rceil ^{\left(  i\right)  }\right\}  .
\]
We can summarize our discussion as follows:

\begin{proposition}
\label{Prop Feasibility condition}The vector $\pi^{T}=\sigma^{T}B^{-1}$ is a
feasible solution of (DI) only if%
\[
\bigcup\nolimits_{i=1,...,m}N_{i}\left(  \sigma\right)  =N.
\]
There is an optimal solution $\pi$ such that the vector $\sigma^{T}=\pi^{T}B$
also satisfies%
\[
\left(  \forall i\right)  \left(  \exists j\right)  \left(  \sigma
_{i}=\left\lceil -d_{ij}\right\rceil ^{\left(  i\right)  }\text{ or
}\left\lceil L\right\rceil ^{\left(  i\right)  }\right)  .
\]

\end{proposition}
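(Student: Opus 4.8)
The plan is to translate Proposition~\ref{Prop Feasibility condition} into a purely bookkeeping statement about the already-derived inequality~(\ref{eq condition 1}). The two assertions to be proved are: (i) feasibility of $\pi^{T}=\sigma^{T}B^{-1}$ in (DI) forces $\bigcup_{i}N_{i}(\sigma)=N$; and (ii) among all optimal solutions there is one whose associated $\sigma$ has each coordinate pinned either to some ceiling $\left\lceil -d_{ij}\right\rceil^{(i)}$ or to the lower-bound ceiling $\left\lceil L\right\rceil^{(i)}$.

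For (i), I would start from the component-wise form~(\ref{eq new ineq 3}) of the dual constraint $\sigma^{T}D\ge 0^{T}$, namely that for each $j$ there exists $i$ with $\sigma_i-b_i+a_{ij}-c_j\ge 0$, i.e. $\sigma_i\ge b_i-a_{ij}+c_j=-d_{ij}$. Since $\pi\in\mathbb{Z}^m$ means $\sigma_i$ has the same fractional part as $b_i$, the bound $\sigma_i\ge -d_{ij}$ can be sharpened to $\sigma_i\ge\left\lceil -d_{ij}\right\rceil^{(i)}$ by the very definition of the operator $\left\lceil\cdot\right\rceil^{(i)}$ as the least real $u$ with $u\ge x$ and $fr(u)=fr(b_i)$. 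That is exactly the condition $j\in N_i(\sigma)$, so the index $j$ lies in some $N_i(\sigma)$; ranging over all $j\in N$ gives $\bigcup_i N_i(\sigma)=N$.

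For (ii), I would argue by a coordinate-lowering (``pushing down'') step: start from an arbitrary optimal $\sigma$ with $\pi^{T}=\sigma^{T}B^{-1}$ feasible and integer, fix $L$ a lower bound of $\varphi'$ on $S_{DI2'}$ coming from the Weak Duality Theorem, and for each $i$ replace $\sigma_i$ by $\max\bigl(\left\lceil L\right\rceil^{(i)},\ \max\{\left\lceil -d_{ij}\right\rceil^{(i)}:j\in N_i(\sigma)\}\bigr)$ (interpreting the inner max as $\left\lceil L\right\rceil^{(i)}$ when $N_i(\sigma)=\emptyset$). This new vector still has the correct fractional parts, still satisfies~(\ref{eq condition 1}) for every $j$ because the sets $N_i$ do not shrink, and has $\varphi'$-value no larger than before since $\varphi'(\sigma)=\max_i\sigma_i$ is isotone and we have only decreased coordinates down to the $L$-floor; by optimality the value is unchanged. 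Each coordinate of the resulting vector is by construction equal to some $\left\lceil -d_{ij}\right\rceil^{(i)}$ or to $\left\lceil L\right\rceil^{(i)}$, which is the second displayed condition.

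The only subtle point, and the part I would state most carefully, is the interplay between the $\left\lceil\cdot\right\rceil^{(i)}$ operators and the integrality of $\pi$: one must check that $\sigma_i$ running through values of the form $\left\lceil -d_{ij}\right\rceil^{(i)}$ or $\left\lceil L\right\rceil^{(i)}$ does correspond, under $\pi_i=\sigma_i-b_i$, to $\pi_i\in\mathbb{Z}$, and that the monotonicity $x\le y\Rightarrow\left\lceil x\right\rceil^{(i)}\le\left\lceil y\right\rceil^{(i)}$ holds so that enlarging a lower bound only enlarges $N_i$. Both are immediate from the definition of $fr$ and of $\left\lceil\cdot\right\rceil^{(i)}$, so there is no real obstacle — the content of the proposition is essentially the observation recorded in the discussion preceding it, and the proof is a matter of assembling that discussion into the two displayed claims.
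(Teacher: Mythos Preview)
Your proposal is correct and follows the same approach as the paper: the paper presents Proposition~\ref{Prop Feasibility condition} explicitly as a summary of the preceding discussion (``We can summarize our discussion as follows'') and gives no separate proof, and your argument simply fleshes out that discussion---deriving~(\ref{eq  condition 1}) from~(\ref{eq  new ineq 3}) and integrality for part~(i), and performing the coordinate-lowering step for part~(ii). One small point worth tightening in your write-up of~(ii): when $N_i(\sigma)=\emptyset$ your replacement $\sigma_i':=\lceil L\rceil^{(i)}$ need not satisfy $\sigma_i'\le\sigma_i$, so the phrase ``we have only decreased coordinates'' is not literally accurate; the cleanest fix is to note that $L$ may be taken small enough (indeed any lower bound, as the paper says) so that $\lceil L\rceil^{(i)}\le\varphi'(\sigma)$ for every $i$, after which the objective is genuinely unchanged.
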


Proposition \ref{Prop Feasibility condition} enables us to compile the
following algorithm for finding an optimal solution of (DI) for general (real)
entries $A,b$ and $c.$ Here we denote%
\[
M_{ij}=\left\lceil -d_{ij}\right\rceil ^{\left(  i\right)  }\text{ \ for every
}i\text{ and }j
\]
and%
\[
M_{i,n+1}=\left\lceil L\right\rceil ^{\left(  i\right)  }\text{ for every }i.
\]

ALGORITHM

\begin{enumerate}
\item $\sigma_{i}:=\max_{j=1,...,n+1}M_{ij}$ for $i=1,...,m.$

\item $K:=\left\{  i\in M:\varphi^{\prime}\left(  \sigma\right)  =\sigma
_{i}\neq\left\lceil L\right\rceil ^{\left(  i\right)  }\right\}  .$

\item (a) For all $i\in K$ set $\sigma_{i}^{\prime}:=\max_{j}\left\{
M_{ij}:M_{ij}<\sigma_{i}\right\}  .$

(b) For all $i\notin K$ set $\sigma_{i}^{\prime}:=\sigma_{i}$.

\item If $\bigcup\nolimits_{i=1,...,m}N_{i}\left(  \sigma^{\prime}\right)
\neq N$ or $K=\emptyset$ then stop ($\sigma$ is optimal).

\item $\sigma:=\sigma^{\prime}$

\item Go to 2.
\end{enumerate}

The number of iterations of this algorithm does not exceed $mn$ since each of
$m$ variables $\sigma_{i}$ can decrease at most $n$ times. The number of
operations in each iteration is $O\left(  m\right)  $ in steps 2,3 and 5 and
$O\left(  mn\right)  $ in step 4. Hence the algorithm is $O\left(  m^{2}%
n^{2}\right)  .$ This includes a possible pre-ordering of each of the sets
$\left\{  M_{ij}:j=1,...,n+1\right\}  ,i=1,...,m$ which is $O\left(  mn\log
n\right)  $ but can be done once before the start of the main loop.

Note that by taking $\left\lfloor b\right\rfloor $ for $b$ we get (DI) that
can be solved directly (Section \ref{Sec Integer duality}) and the difference
between $\varphi_{I}^{\min}$ and $\varphi^{\min}$ is up to 1 since
$\left\lfloor \pi^{T}b\right\rfloor =\pi^{T}\left\lfloor b\right\rfloor $ if
$\pi$ is integer. Hence a good estimate of the optimal objective function
value can be obtained by directly solving (DI) where $b$ is replaced by
$\left\lfloor b\right\rfloor $.

\section{Conclusions}

We have presented simple proofs of the strong and weak duality theorems of
tropical linear programming. It follows that there is no duality gap in
tropical linear programming. This result together with known results on
subeigenvectors enables us to solve a special tropical linear program with
two-sided constraints in $O\left(  n^{3}\right)  $ time.

We have then studied the duality gap in tropical integer linear programming. A
direct solution is available for the primal problem. An algorithm of quadratic
complexity has been presented for the dual problem. A direct solution of the
dual problem is available provided that all coefficients of the objective
function are integer. This solution readily provides a good estimate of the
optimal objective function value for general dual integer programs.

\textbf{Acknowledgement}: This work was supported by the EPSRC grant EP/J00829X/1.

\end{document}